\newtheorem{theorem}{Theorem}[section]
\newtheorem{claim}[theorem]{Claim}
\newtheorem{lemma}[theorem]{Lemma}
\newtheorem{corollary}[theorem]{Corollary}
\theoremstyle{definition}
\newtheorem{definition}[theorem]{Definition}
\theoremstyle{remark}
\newtheorem{remark}[theorem]{Remark}
\newtheorem{question}[theorem]{Question}
\newtheorem{notation}[theorem]{Notation}
\newcommand{\cf}{{\rm cf}}
\newcommand{\UB}{{\rm UB}}
\newcommand{\ADS}{{\rm ADS}}
\def\mathunderaccent#1#2 {\let\theaccent#1\skewfactor#2
\mathpalette\putaccentunder}
\def\putaccentunder#1#2{\oalign{$#1#2$\crcr\hidewidth
\vbox to.2ex{\hbox{$#1\skew\skewfactor\theaccent{}$}\vss}\hidewidth}}
\begin{document}

\title {Usuba's principle $\UB_\lambda$ can fail at singular cardinals}

\author[M.  Golshani]{Mohammad Golshani}

\address{Mohammad Golshani, School of Mathematics, Institute for Research in Fundamental Sciences (IPM), P.O.\ Box:
	19395--5746, Tehran, Iran.}

\email{golshani.m@gmail.com}

\author[S. Shelah] {Saharon Shelah}
\address{Einstein Institute of Mathematics\\
Edmond J. Safra Campus, Givat Ram\\
The Hebrew University of Jerusalem\\
Jerusalem, 91904, Israel\\
 and \\
 Department of Mathematics\\
 Hill Center - Busch Campus \\
 Rutgers, The State University of New Jersey \\
 110 Frelinghuysen Road \\
 Piscataway, NJ 08854-8019 USA}
\email{shelah@math.huji.ac.il}
\urladdr{http://shelah.logic.at}
\thanks{ The first author's research has been supported by a grant from IPM (No. 1400030417). The
	second author's research has been partially supported by Israel Science Foundation (ISF) grant no:
	1838/19. This is publication 1216 of second author. The authors thank the referee of the paper for his/her very careful reading of the paper and detecting some essential errors in earlier versions of the paper.}

\subjclass[2020]{Primary:03E05, 03E55 }

\keywords {Usuba's question, singular cardinals, Chang's conjecture.}

\date{\today}

\begin{abstract}
 We answer a question of Usuba by showing that the combinatorial principle $\UB_\lambda$ can fail at a singular cardinal. Furthermore, $\lambda$ can
 be taken to be $\aleph_\omega.$
\end{abstract}

\maketitle
\numberwithin{equation}{section}
\section{introduction}
In \cite{usuba}, Usuba introduced a new combinatorial principle,
denoted $\UB_\lambda.$\footnote{See Section \ref{per} for the statement of the principle.}
He showed that $\UB_\lambda$ holds for all regular uncountable cardinals and that for singular cardinals, some very weak assumptions like
weak square or even $\ADS_\lambda$
imply it.
It is known that $\ADS_\lambda$ can fail for singular cardinals, for example if $\kappa$ is supercompact and $\lambda > \kappa$ is such that $\cf(\lambda) < \kappa$. Motivated by this results, Usuba asked  the following question:

\begin{question}
\label{usubaq}
(\cite[Question 2.11]{usuba}) Is it consistent that $\UB_\lambda$ fails for some singular cardinal $\lambda$?
\end{question}
In this paper we give a positive answer to the above question by showing that Chang's transfer principle $(\aleph_{\omega+1}, \aleph_\omega) \twoheadrightarrow (\aleph_1, \aleph_0)$ implies the failure of $\UB_{\aleph_\omega}$ if $\aleph_\omega$ is strong limit, see Theoem \ref{thm1}, where a stronger result is proved.

The paper is organized as follows. In Section \ref{per}, we present some preliminaries and results and then in Section \ref{proof}, we prove our main result.

\section{Some preliminaries}
\label{per}
In this section we present some definitions and results that are needed for the later section of this paper.
Let us start by introducing  Usuba's principle.
\begin{definition}
Let $\lambda$ be an uncountable cardinal. The principle $\UB_\lambda$ is the statement:
there exists a function $f: [\lambda^+]^{<\omega} \to \lambda^+$
such that if $x, y \subseteq \lambda^+$ are closed under $f$, $x \cap \lambda=y \cap \lambda$
and $\sup(x \cap \lambda)=\lambda$, then $x \subseteq y$ or $y  \subseteq x.$
\end{definition}
It turned out this principle has many equivalent formulations. To state a few of it,
let $S=\{x \subseteq \lambda: \sup(x)=\lambda   \}$,  $\theta > \lambda$ be large enough regular and let $\lhd$ be a well-ordering of $H(\theta)$. Then
we have the following.
\begin{lemma}
\label{lem1} (\cite{usuba})
The following are equivalent:
\begin{enumerate}
\item $\UB_\lambda$,

\item If $M, N \prec (H(\theta), \in, \lhd, \lambda, S, \cdots)$ are such that $M \cap \lambda=N \cap \lambda \in S,$ then either
$M \cap \lambda^+ \subseteq N \cap \lambda^+$ or $N \cap \lambda^+ \subseteq M \cap \lambda^+$,

\item If $M, N \prec (H(\theta), \in, \lhd, \lambda, S, \cdots)$ are such that $M \cap \lambda=N \cap \lambda \in S,$ and
$\sup(M \cap \lambda^+) \leq \sup(N \cap \lambda^+)$, then $M \cap \lambda^+$ is an initial segment of $N \cap \lambda^+.$
\end{enumerate}
\end{lemma}
The principle $\UB_\lambda$ has many nice implications. Here we only consider its relation with the Chang's transfer principles
which is also related to our work.
\begin{definition}
Suppose $\lambda > \mu$ are infinite cardinal. The Chang's transfer principle  $(\lambda^+, \lambda) \twoheadrightarrow (\mu^+, \mu)$ is the statement: if $\mathcal L$ is a countable first order language
which contains a unary predicate $U$, then for any $\mathcal L$ -structure $\mathcal M=(M, U^{\mathcal M}, \cdots)$ with $|M|=\lambda^+$
and $|U^{\mathcal M}|=\lambda$, there exists an elementary submodel $\mathcal N= (N, U^{\mathcal N}, \cdots)$ of $\mathcal M$ with
$|N|=\mu^+$ and $|U^{\mathcal N}|=\mu$.

Given an infinite cardinal $\nu,$ The transfer principle $(\lambda^+, \lambda) \twoheadrightarrow_{\leq \nu} (\mu^+, \mu)$ is defined similarly, where we allow the language $\mathcal L$ to have size at most $\nu$.
\end{definition}
The next lemma shows the relation between $\UB_{\aleph_\omega}$ and Chang's transfer principles.
\begin{lemma}
(\cite[Corollary 4.2]{usuba})
Suppose $\UB_{\aleph_{\omega}}$ holds. Then the Chang transfer principles $(\aleph_{\omega+1}, \aleph_\omega) \twoheadrightarrow (\aleph_{n+1}, \aleph_n)$
fail for all $1 \leq n < \omega.$
\end{lemma}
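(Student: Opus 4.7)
I would prove the contrapositive: assume Chang's transfer principle $(\aleph_{\omega+1}, \aleph_\omega) \twoheadrightarrow (\aleph_{n+1}, \aleph_n)$ holds for some $1 \le n < \omega$, and derive that $\UB_{\aleph_\omega}$ fails. It is convenient to use the model-theoretic equivalent (2) from the excerpt. The goal is to produce two elementary submodels $N_0, N_1 \prec \mathcal{M} = (H(\theta), \in, \lhd, \aleph_\omega, S, \dots)$ (with $\theta$ large regular) whose traces on $\aleph_\omega$ coincide and lie in $S$, but whose traces on $\aleph_{\omega+1}$ are $\subseteq$-incomparable.

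By Chang transfer (applied after passing to an elementary submodel of $\mathcal{M}$ of size $\aleph_{\omega+1}$) obtain $N_0 \prec \mathcal{M}$ with $|N_0| = \aleph_{n+1}$ and $|N_0 \cap \aleph_\omega| = \aleph_n$. By elementarity every $\aleph_k$ lies in $N_0$, so $A := N_0 \cap \aleph_\omega$ is cofinal in $\aleph_\omega$ and belongs to $S$. Since $|N_0 \cap \aleph_{\omega+1}| = \aleph_{n+1} < \aleph_{\omega+1}$, fix some $\alpha \in \aleph_{\omega+1} \setminus N_0$. Next, expand the language with a unary predicate for $A$ together with a constant for $\alpha$, and apply Chang transfer again to obtain $N_1 \prec \mathcal{M}$ of size $\aleph_{n+1}$ with $|N_1 \cap \aleph_\omega| = \aleph_n$. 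A Skolem-hull seeding by the set $A$ is then used to enforce $A \subseteq N_1$, whence $N_1 \cap \aleph_\omega = A$ by cardinality. The element $\alpha \in N_1 \setminus N_0$ witnesses $N_1 \cap \aleph_{\omega+1} \not\subseteq N_0 \cap \aleph_{\omega+1}$, and a symmetric construction (interchanging the roles of $N_0, N_1$ and using a new parameter $\beta \in (N_0 \cap \aleph_{\omega+1}) \setminus N_1$) yields the reverse non-inclusion, contradicting the form (2) of $\UB_{\aleph_\omega}$.

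The main obstacle is the step forcing the exact trace $N_1 \cap \aleph_\omega = A$. Naively, a Chang submodel obtained with $A$ only as a predicate or as a constant need not contain $A$ as a subset, and closure of $A \cup \{\alpha\}$ under the Skolem functions of $\mathcal{M}$ could pull ordinals of $\aleph_\omega \setminus A$ into the hull via any witness $f$ for $\UB_{\aleph_\omega}$ present in the structure, inflating the trace beyond $A$. To handle this I would code $A$ by its enumerating function $e : \aleph_n \to A$, adjoin $e$ as a function symbol, and arrange $\aleph_n \subseteq N_1$ in the Chang construction (a standard Skolem-hull variant, facilitated by the cofinality constraint $\cf(\aleph_\omega) = \omega < \aleph_n$ which gives enough room to absorb new $\aleph_\omega$-ordinals into $A$ without inflating $|N_1 \cap \aleph_\omega|$). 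Once $\aleph_n \subseteq N_1$ is secured, closure under $e$ forces $A = e[\aleph_n] \subseteq N_1$, and the Chang cardinality bound seals $N_1 \cap \aleph_\omega = A$. The delicate coordination between the existence statement of the Chang principle and the precise control on the $\aleph_\omega$-trace is the heart of the argument.
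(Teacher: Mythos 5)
The paper does not actually prove this lemma itself; it cites Usuba's Corollary 4.2 and then reproves a stronger form of it as Theorem~\ref{thm1}(b)$\Rightarrow\neg\UB_\lambda$, going through the chain (b)$\Rightarrow$(c)$\Rightarrow$(d)$\Rightarrow$(e)$\Rightarrow\neg\UB_\lambda$. That argument never tries to build two elementary submodels with \emph{prescribed identical} traces on $\aleph_\omega$; it instead extracts from one Chang submodel a stationary set of indices with controlled closures (Claim~\ref{cl2}), converts that into the combinatorial principle~(e), and only then derives $\neg\UB_\lambda$ from a witness $f$ for $\UB_\lambda$ by coding. Your proposal attempts the direct route of producing $N_0,N_1$ with $N_0\cap\aleph_\omega=N_1\cap\aleph_\omega$ and $N_0,N_1$ $\subseteq$-incomparable, which is indeed what a witness pair must look like (cf.\ the remark after Theorem~\ref{thm1}), but the paper's indirection exists precisely because the direct route is hard.

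The concrete gap is in the sentence ``the Chang cardinality bound seals $N_1\cap\aleph_\omega=A$.'' Having $A=e[\aleph_n]\subseteq N_1\cap\aleph_\omega$ together with $|N_1\cap\aleph_\omega|=\aleph_n=|A|$ does \emph{not} imply $N_1\cap\aleph_\omega=A$; a set can properly contain another set of the same infinite cardinality, and nothing in the Chang hypothesis prevents Skolem functions (in particular any witness $f$ for $\UB_{\aleph_\omega}$ sitting in the structure) from pushing a few ordinals of $\aleph_\omega\setminus A$ into the hull while the trace stays of size $\aleph_n$. A second gap: Chang's conjecture does not by itself let you arrange $\aleph_n\subseteq N_1$ while keeping $|N_1\cap\aleph_\omega|=\aleph_n$. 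If you take the Skolem hull of $N_1'\cup\aleph_n$ for a Chang submodel $N_1'$, the hull has size $\aleph_{n+1}$ and there is no reason its trace on $\aleph_\omega$ stays of size $\aleph_n$ --- it can easily jump to $\aleph_{n+1}$, destroying the Chang shape. Finally, even granting the trace control, the ``symmetric construction'' for the reverse non-inclusion needs $(N_0\cap\aleph_{\omega+1})\setminus N_1\ne\emptyset$, which is not automatic and would require a further argument. These are not cosmetic: the need for exact trace control is exactly what the paper circumvents by working with the sequence-coding principle~(e) rather than with pairs of Chang submodels directly.
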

\begin{remark}
By \cite{shelah1008}, $(\aleph_{\omega+1}, \aleph_\omega) \twoheadrightarrow (\aleph_{n+1}, \aleph_n)$ fails for all
$n \geq 3$.
\end{remark}
Since the consistency of the  transfer principle $(\aleph_{\omega+1}, \aleph_\omega) \twoheadrightarrow (\aleph_{n+1}, \aleph_n)$ is open for $n = 1, 2,$
one can not use the above result to get the consistent failure of $\UB_{\aleph_{\omega}}$. In the next section we show that if $\aleph_\omega$ is strong limit, then $\UB_{\aleph_{\omega}}$  implies the failure of $(\aleph_{\omega+1}, \aleph_\omega) \twoheadrightarrow (\aleph_{1}, \aleph_0)$ as well, and hence by the results of \cite{levi} (see also \cite{eskew} and \cite{hayut}, where the consistency of $\text{GCH}+(\aleph_{\omega+1}, \aleph_\omega) \twoheadrightarrow (\aleph_{1}, \aleph_0)$ is proved using weaker large cardinal assumptions) $\UB_{\aleph_{\omega}}$ can fail.
We also need the following notion.
\begin{definition}
An uncountable cardinal $\kappa$ is said to be Jonsson, if for every function $f:[\kappa]^{<\omega} \to \kappa$ there exists a set $H \subseteq \kappa$ of order type $\kappa$ such that for each $n$, $f''[H]^n \neq \kappa.$
\end{definition}
\begin{notation}
Given a model $M$ and a subset $A$ of $M$, by $cl(A, M)$ we mean the least substructure of $M$ which includes $A$ as a subset.
\end{notation}
\begin{lemma}
\label{lem2} Assume $\lambda$ is a singular strong limit cardinal of cofinality $\kappa.$ Then there is a model $M_0$ with vocabulary $\mathcal L_0$ such that:
\begin{enumerate}
\item[(a)] $|\mathcal L_0|=\kappa$ and $|M_0|=\lambda^+,$

\item[(b)] if $M$ is an $\mathcal L$-structure which expands $M_0$, $|\mathcal L|=\kappa$ and $M$ has Skolem functions, then for $\alpha_1, \alpha_2 < \lambda^+,$ the following statements are equivalent:
    \begin{enumerate}
\item[$(\dag)_{\alpha_1, \alpha_2}$] for some submodels $N_1, N_2$ of $M$ we have:
\begin{enumerate}
\item[$(\alpha)$] $N_1 \cap \lambda=N_2 \cap \lambda$ is unbounded in $\lambda$,

\item[$(\beta)$] $\alpha_1 \in N_1 \setminus N_2$ and  $\alpha_2 \in N_2 \setminus N_1$.
\end{enumerate}
\item[$(\ddag)_{\alpha_1, \alpha_2}$] if $V_\ell=cl(\{\alpha_\ell\}, M) \cap \lambda$, $\ell=1, 2,$ and $V=V_1 \cup V_2$, then
\[
\alpha_1 \notin cl(\{\alpha_2\} \cup V, M) \text{~}\& \text{~} \alpha_2 \notin cl(\{\alpha_1\} \cup V, M).
\]
   \end{enumerate}
\end{enumerate}
\end{lemma}
\begin{proof}
Let $\langle  \lambda_i: i<\kappa         \rangle$ be an increasing sequence cofinal in $\lambda$
such that for all $i<\kappa, 2^{\lambda_i} < \lambda_{i+1}$. For each $0<n<\omega,$ let
\[
\langle F_{n, \alpha}: \alpha \in [\lambda_i, 2^{\lambda_i} ) \rangle
\]
enumerate all functions from $\lambda_i$ into $\lambda_i.$ Let $M_0$ be defined as follows:
\begin{itemize}
\item the universe of $M_0$ is $\lambda^+,$

\item $<^{M_0}= \{ (\alpha, \beta): \alpha < \beta < \lambda^+      \}$,

\item $c_i^{M_0}=\lambda_i$,

\item $P^{M_0}=\{\alpha: \alpha < \lambda\},$

\item $F_n^{M_0}$ is an $(n+1)$-ary function such that:
\begin{itemize}
\item if $i<\kappa, \alpha \in [\lambda_i, 2^{\lambda_i})$ and $\beta_0, \cdots, \beta_{n-1} < \lambda_i$, then
\[
F_n^{M_0}(\beta_0, \cdots, \beta_{n-1}, \alpha)=F_{n, \alpha}(\beta_0, \cdots, \beta_{n-1}),
\]

\item in all other cases, $F_n^{M_0}(\beta_0, \cdots, \beta_{n-1}, \beta_n)=\beta_n.$
\end{itemize}
\end{itemize}
We show that the model $M_0$ is as required. Clause (a) clearly holds. To show that clause (b) is satisfied, let
$M$ be an $\mathcal L$-structure which expands $M_0$, $|\mathcal L|=\kappa$ and suppose $M$ has Skolem functions. Let also $\alpha_1, \alpha_2 < \lambda^+.$

First suppose that $(\dag)_{\alpha_1, \alpha_2}$ holds, and suppose that  the models $N_1, N_2$ witness it. Let also
 $V_\ell=cl(\{\alpha_\ell\}, M) \cap \lambda$, $\ell=1, 2$. Clearly
 each $V_\ell$ is an unbounded subset of $\lambda$. Let $V=cl(V_1 \cup V_2, M) \cap \lambda$ and set
 $N^*_\ell=cl(\{\alpha_\ell\} \cup V, M)$.
 \begin{claim}
\label{cla2}
$N^*_\ell \subseteq N_\ell,$ for $\ell=1, 2.$
\end{claim}
\begin{proof}
Fix $\ell.$ Sine $\alpha_\ell \in N_\ell,$
$$V_\ell=cl(\{\alpha_\ell\}, M) \cap \lambda \subseteq N_\ell \cap \lambda.$$
On the other hand, $N_1 \cap \lambda=N_2 \cap \lambda,$ hence
$$V_{3-\ell}=cl(\{\alpha_{3-\ell}\}, M) \cap \lambda \subseteq N_{3-\ell} \cap \lambda=N_\ell \cap \lambda.$$
It follows that $V_1 \cup V_2 \subseteq N_\ell \cap \lambda,$
and hence
\[
V=cl(V_1 \cup V_2, M) \cap \lambda \subseteq N_\ell.
\]
Thus, as $\{\alpha_\ell\} \cup V \subseteq N_\ell,$ we have
\[
N^*_\ell=cl(\{\alpha_\ell\} \cup V, M) \subseteq N_\ell.
\]
The result follows.
\end{proof}
 \begin{claim}
\label{cla4}
$\alpha_1 \in N^*_1\setminus N^*_2$ and $\alpha_2 \in N^*_2\setminus N^*_1$.
\end{claim}
\begin{proof}
Fix $\ell \in \{1, 2\}.$ Clearly $\alpha_\ell \in N^*_\ell$. On the other hand, by our assumption,
$\alpha_\ell \notin N_{3-\ell}$, and by Claim \ref{cla2}, $N^*_{3-\ell} \subseteq N_{3-\ell}.$
Thus $\alpha_\ell \notin N^*_{3-\ell}$.
\end{proof}
Thus $(\ddag)_{\alpha_1, \alpha_2}$ is satisfied.

Conversely  suppose that $(\ddag)_{\alpha_1, \alpha_2}$ holds, and for $\ell=1, 2,$ set $N_\ell=cl(\{\alpha_\ell\} \cup V, M)$.
By our assumption, clause ($\beta$) of  $(\dag)_{\alpha_1, \alpha_2}$ holds.


 \begin{claim}
 \label{cla1} For $\ell \in \{1, 2\}$,
 $N_\ell \cap \lambda=V.$
 \end{claim}
\begin{proof}
Fix $\ell \in \{1, 2\}$. Clearly $N_\ell \cap \lambda \supseteq V.$ Now suppose towards a contradiction
that  $N_\ell \cap \lambda \neq V,$ and let $\gamma \in  N_\ell \cap \lambda \setminus V.$
As $M$ has Skolem functions, there are $n, \beta_0, \cdots, \beta_{n-1} \in V$ and $(n+1)$-ary function symbol $F$ in  $\mathcal L$
such that
\[
\gamma=F^M(\beta_0, \cdots, \beta_{n-1}, \alpha_\ell).
\]
As $\beta_0, \cdots, \beta_{n-1} \in V \subseteq \lambda$ and $\gamma < \lambda$, there is $i<\kappa$ such that
$\beta_0, \cdots, \beta_{n-1}, \gamma < \lambda_i$. Define an $n$-ary function $G: \lambda_i \rightarrow \lambda_i$ as follows:
\begin{center}
 $G(\xi_0, \cdots, \xi_{n-1})=$ $\left\{
\begin{array}{l}
         F^M(\xi_0, \cdots, \xi_{n-1}, \alpha_\ell) \hspace{0.8cm} \text{ if } F^M(\xi_0, \cdots, \xi_{n-1}, \alpha_\ell) < \lambda_i ;\\
         0  \hspace{3.85cm}  \text{ otherwise}.
     \end{array} \right.$
\end{center}
Note that  $G \in \{F_{n, \zeta}: \zeta  \in [\lambda_i, 2^{\lambda_i}) \}$. Let
\[
\zeta_*=\min\{\zeta: (\forall \xi_0, \cdots, \xi_{n-1} < c_i) G(\xi_0, \cdots, \xi_{n-1})=F^{M_0}_n(\xi_0, \cdots, \xi_{n-1}, \zeta)   \}.
\]
$\zeta_*$ is well-defined and is definable in $M$ (even in $M_0$) from $\alpha_\ell$, so clearly
$\zeta_* \in cl(\{\alpha_\ell\}, M)$.

As $\zeta_* \in cl(\{\alpha_\ell\}, M) \cap \lambda = V_\ell \subseteq V$ and $\beta_0, \cdots, \beta_{n-1} \in V,$ so
\[
\gamma=F^M(\beta_0, \cdots, \beta_{n-1}, \alpha_\ell) = F^M_{n, \zeta_*}(\beta_0, \cdots, \beta_{n-1}) \in V.
\]
This contradicts our initial assumption that $\gamma \in  N_\ell \cap \lambda \setminus V$.
The claim follows.
\end{proof}

\begin{claim}
\label{cla3}
$N_1 \cap \lambda=N_2 \cap \lambda$.
\end{claim}
\begin{proof}
By Claim \ref{cla1}, we have
$
N_1 \cap \lambda=V=N_2 \cap \lambda,$ which concludes the result.
\end{proof}
By Claim
\ref{cla3}, $N_1 \cap \lambda=N_2 \cap \lambda$,
 which implies clause ($\alpha$) of  $(\dag)_{\alpha_1, \alpha_2}$.
Thus  $N_1$ and $N_2$ are as required in clause $(\dag)_{\alpha_1, \alpha_2}$.

This completes the proof of the lemma.
\end{proof}

\section{$\UB_\lambda$ can fail at singular cardinals}
\label{proof}
In this section we  prove  the following theorem which answers Usuba's question \ref{usubaq}.
\begin{theorem}
\label{thm1}
Assume $\lambda$ is a singular strong limit cardinal.
$\UB_\lambda$ fails if  at least one of the following hold:
\begin{enumerate}
\item[(a)] $\lambda=\aleph_\omega$ and the Chang's transfer principle
$(\lambda^+, \lambda) \twoheadrightarrow (\aleph_1, \aleph_0)$
holds,

\item[(b)]  $\lambda > \mu \geq \cf(\lambda)$ are such that $(\lambda^+, \lambda) \twoheadrightarrow_{\leq \cf(\lambda)} (\mu^+, \mu)$
holds,

\item[(c)] $\lambda > \mu \geq \cf(\lambda)$ and for every model $M$ with universe $\lambda^+$ and vocabulary of cardinality $\cf(\lambda)$,
we can find an increasing sequence $\vec{\alpha}=\langle \alpha_i: i < \mu^+ \rangle$ of ordinals less than $\lambda^+$ such that
\[
S^M_{\vec{\alpha}}=\{i<\mu^+: cl(\{\alpha_i\}, M) \cap \lambda \subseteq cl(\{\alpha_j: j<i  \}, M)      \}
\]
is stationary in $\mu^+$,

\item[(d)] there exists $\chi$ with $\lambda > \chi=\cf(\chi) > \cf(\lambda)$ such that for every model $M$ with universe $\lambda^+$ and vocabulary of cardinality $\cf(\lambda)$,
we can find an increasing sequence $\vec{\alpha}=\langle \alpha_i: i < \chi \rangle$ of ordinals less than $\lambda^+$ such that
\[
S^M_{\vec{\alpha}}=\{i<\chi: cl(\{\alpha_i\}, M) \cap \lambda \subseteq cl(\{\alpha_j: j<i  \}, M)      \}
\]
is stationary in $\chi$,

\item[(e)] there is no sequence $\vec{X}=\langle U_i: i<\lambda^+ \rangle$ such that each $U_i \cap \lambda$ is a  cofinal subset of  $\lambda$, $U_i \cap \lambda$ has size $\cf(\lambda)$,
and for every $i<\lambda^+$ there is a sequence $\vec{X}_i=\langle (\alpha_{i, j}, \beta_{i, j}): j<i       \rangle$ such that:
\begin{itemize}
\item $\vec{X}_i$ has no repetition,
\item $\alpha_{i, j} \in U_i$,
\item $\beta_{i, j} \in U_j \cap \lambda$.
\end{itemize}
\end{enumerate}
Furthermore,  the statement (e) is equivalent to $\neg \UB_\lambda$, provided that $\cf(\lambda)$ is not a Jonsson cardinal.
\end{theorem}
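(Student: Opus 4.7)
The plan falls into three parts: (i) deduce $\neg\UB_\lambda$ from each of (a)--(d) via a common template producing a stationary ``witnessing sequence'' $\vec\alpha$; (ii) derive $(e) \Rightarrow \neg\UB_\lambda$ by a parallel, more combinatorial argument; (iii) prove the converse $\neg\UB_\lambda \Rightarrow (e)$ under the hypothesis that $\cf(\lambda)$ is not Jonsson.

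For (i), the common core is to extract a sequence $\vec\alpha$ of ordinals below $\lambda$ together with a stationary $S^M_{\vec\alpha}$ on which the Skolem closure of $\{\alpha_i\}$ in a given model $M$ contributes no new elements of $\lambda$ to the cumulative closure. Clause (a) is the specialization of (b) to $\lambda = \aleph_\omega$, $\mu = \aleph_0$. For (b), expand $M$ by Skolem functions, apply the Chang transfer to obtain $N \prec M^*$ with $|N| = \mu^+$ and $|N \cap \lambda| = \mu$, and enumerate $\vec\alpha$ along $N \cap \lambda$ with repetition: once the running closure absorbs $N \cap \lambda$ (within $\mu$ steps), every subsequent $\alpha_i$ lies in that closure, so a tail of $\mu^+$ sits in $S^M_{\vec\alpha}$. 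Clauses (c) and (d) already give such a sequence directly, for the regular cardinal $\chi = \mu^+$ or for general $\chi > \cf(\lambda)$ respectively.

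The principal step is to convert the stationarity of $S^M_{\vec\alpha}$ into $\neg\UB_\lambda$. Assume $\UB_\lambda$ holds via $f$, expand $(\lambda^+, f, \lhd)$ to a model $M$ of vocabulary of size $\cf(\lambda)$, and apply the hypothesis to obtain $\vec\alpha$ with $S^M_{\vec\alpha}$ stationary. Pass to a stationary subset on which the cumulative $\lambda$-traces $\text{cl}(\{\alpha_j : j < i\}, M) \cap \lambda$ have stabilized to some $W$ with $\sup W = \lambda$ (so $W \in S$); then construct closures in $\lambda^+$ containing $W$ together with a parameter varying with $i$. All such closures share $\lambda$-trace $W$, so by $\UB_\lambda$ they form a $\subseteq$-chain in $\lambda^+$, and a counting argument using $\chi > \cf(\lambda)$ against the cardinality $\cf(\lambda)$ of each closure yields the contradiction. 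The argument $(e) \Rightarrow \neg\UB_\lambda$ follows the same spirit: from $f$, let $U_i$ consist of the $\lambda$-traces of $f$-closures of $\{i\} \cup \vec\eta$ for cofinal $\vec\eta \in [\lambda]^{\cf(\lambda)}$, and read off the pairs $(\alpha_{i,j}, \beta_{i,j})$ from the $\UB_\lambda$-forced comparability of closures built from $\{i\}$ and $\{j\}$ that share a common $\lambda$-trace in $S$; the non-repetition clause follows from the uniqueness of this comparability.

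For the converse $\neg\UB_\lambda \Rightarrow (e)$ under non-Jonsson $\cf(\lambda)$, suppose for contradiction that a sequence $\vec X$ witnesses $\neg(e)$ while $\UB_\lambda$ fails. Translate the pairing data $(\alpha_{i,j}, \beta_{i,j})$ into a colouring $F: [\cf(\lambda)]^{<\omega} \to \cf(\lambda)$ encoding the $\subseteq$-incomparabilities afforded by $\neg\UB_\lambda$; non-Jonsson-ness of $\cf(\lambda)$ then delivers a ``large'' subset on which $F$ avoids some value, and unwrapping the encoding assembles a function on $[\lambda^+]^{<\omega}$ witnessing $\UB_\lambda$, a contradiction. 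The main obstacle is this final step — arranging the colouring so that the Jonsson failure extracts exactly a $\UB_\lambda$-witness rather than a strictly weaker combinatorial invariant; this relies crucially on the uniformity of the cofinal $U_i$'s together with the non-repetition clause in (e), which together pin down enough structure to align the closure-theoretic witness with the precise definition of $\UB_\lambda$.
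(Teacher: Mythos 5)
Your proposal diverges structurally from the paper at the crucial point: the paper chains the implications
$(a)\Rightarrow(b)\Rightarrow(c)\Rightarrow(d)\Rightarrow(e)\Rightarrow\neg\UB_\lambda$, so that clause~(e) is the single combinatorial bottleneck through which every other clause is funneled, whereas you attempt a \emph{direct} derivation of $\neg\UB_\lambda$ from the stationarity of $S^M_{\vec\alpha}$, treating (e) as a parallel case. That direct route has a genuine gap. You claim one can ``pass to a stationary subset on which the cumulative $\lambda$-traces $\mathrm{cl}(\{\alpha_j:j<i\},M)\cap\lambda$ have stabilized to some $W$.'' But the sequence $W_i=\mathrm{cl}(\{\alpha_j:j<i\},M)\cap\lambda$ is $\subseteq$-increasing and continuous, and stationarity of $S^M_{\vec\alpha}$ gives no control at the indices \emph{outside} $S^M_{\vec\alpha}$, where new elements of $\lambda$ can (and generically will) keep entering; an increasing continuous sequence cannot be constant on a stationary set without being eventually constant, and nothing in (c) or (d) forces eventual constancy. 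Without a fixed common $\lambda$-trace there is no chain to count against, so the ``counting argument using $\chi>\cf(\lambda)$'' never gets off the ground. The paper avoids this entirely: $(d)\Rightarrow(e)$ is a contrapositive pigeonhole/Fodor argument against a model encoding a putative $\vec X$ (not an argument about $\UB_\lambda$ at all), and only $(e)\Rightarrow\neg\UB_\lambda$ touches the principle, via the observation $(*)_2$ that $\UB_\lambda$ forces $\xi\in\mathrm{cl}(\{\xi\}\cup(N_\xi\cap\lambda)\cup(N_\zeta\cap\lambda),M)$ for $\xi<\zeta$ in a club $E$, which then yields the forbidden sequence of pairs.

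A second, smaller issue: in the converse $\neg\UB_\lambda\Rightarrow(e)$ you say non-Jonsson-ness ``delivers a large subset on which $F$ avoids some value.'' That is the Jonsson property, not its negation. The paper uses non-Jonsson-ness of $\cf(\lambda)$ in the opposite direction: it guarantees the existence of a function hitting \emph{every} colour on any large set (clause (3) in their definition of $f$), which is exactly what is needed to build the $\UB_\lambda$-witness $f$ from the data of $\neg(e)$. As stated, your sketch would invoke Jonsson-ness rather than its failure, and would not assemble an $f$ with the required surjectivity properties.

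The reduction $(b)\Rightarrow(c)$ in your write-up (Chang model $N$, enumerate along $N$, absorb $N\cap\lambda$ within $\mu$ steps so a tail lies in $S^M_{\vec\alpha}$) does match the paper's Claim, and the overall spirit of $(e)\Rightarrow\neg\UB_\lambda$ (build $U_i$ from $f$-closures and read off pairs from forced comparability) is in the right direction; the problems are the unjustified stabilization in the direct $(c)/(d)$ argument and the inverted use of Jonsson-ness.
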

\begin{remark}
The assumption ``$\lambda$ is a strong limit cardinal'' is only used in the proof of (e) implies $\neg \UB_\lambda$.
\end{remark}
\begin{proof}
We prove the theorem by a sequence of claims. First note that:
\begin{claim}
\label{cla5}
Clause (a) is a special case of clause (b), and
clause (c) implies clause (d).
 \end{claim}
\begin{claim}
\label{cl2}
(b) implies (c).
\end{claim}
\begin{proof}
Let $M$ be a model with universe $\lambda^+$ and vocabulary of cardinality at most $\cf(\lambda)$. By (b), there exists an elementary submodel $N \prec M$ such that $||N||=\mu^+$ and $|N \cap \lambda|=\mu.$ Let $\vec{\alpha}=\langle  \alpha_i: i<\mu^+  \rangle$
list in increasing order the first $\mu^+$ elements of $N.$ So for $i<\mu^+$ we have
\[
cl(\{\alpha_i\}, M) \cap \lambda \subseteq N \cap \lambda,
\]
and since $N \cap \lambda$ has size $\mu$, we can find some $i(*)< \mu^+$ such that
\[
\forall i<\mu^+,~ cl(\{\alpha_i\}, M) \cap \lambda \subseteq \bigcup\limits_{j<i(*)}cl(\{\alpha_j\}, M).
\]
Hence the set $S^M_{\vec{\alpha}}$ includes $[i(*), \mu^+)$ and so is stationary in $\mu^+$,
as requested.
\end{proof}
\begin{claim}
\label{cl3}
(d) implies (e).
\end{claim}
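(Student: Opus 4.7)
The strategy is contrapositive: assume a witness $\vec{X} = \langle U_i : i<\lambda^+\rangle$ to the negation of (e), together with its pair-sequences $\vec{X}_i = \langle (\alpha_{i,j},\beta_{i,j}) : j<i\rangle$, and contradict (d). Fix any candidate $\chi$ with $\cf(\lambda) < \chi = \cf(\chi) < \lambda$. I would construct a single model $M^*$ on the universe $\lambda^+$ with vocabulary of cardinality $\cf(\lambda)$ that encodes $\vec{X}$ and its witnesses, apply (d) to $M^*$ to obtain a stationary $S \subseteq \chi$, and then use $S$ to produce two identical pairs in some $\vec{X}_\gamma$, contradicting the no-repetition assumption.

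For the encoding, fix for each $i<\lambda^+$ a chosen element $A_i \in U_i$, listed as $A_i = \{a^i_\xi : \xi<\cf(\lambda)\}$. The vocabulary of $M^*$ would consist of: unary functions $F_\xi$ with $F_\xi(i) = a^i_\xi$, ensuring $A_i \subseteq cl(\{i\}, M^*)$; binary functions $G_\xi, H_\xi$ on $\lambda^+ \times \lambda^+$ whose $\xi$-indexed values enumerate the sets $\alpha_{i,j}$ and $\beta_{i,j}$ for $j<i$; and auxiliary functions $\lambda \to \lambda^+$, such as $\beta \mapsto \min\{\gamma : \beta \in A_\gamma\}$, so that the closure of a singleton $\{\beta\}$ below $\lambda$ still reaches into $\lambda^+ \setminus \lambda$ and recovers full $\vec{X}$-data at an associated index. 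The total vocabulary remains of cardinality $\cf(\lambda)$.

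Now apply (d) to $M^*$ to obtain $\vec{\alpha} = \langle \alpha_i : i<\chi\rangle$ with $S = S^{M^*}_{\vec{\alpha}}$ stationary in $\chi$. For $i\in S$, the closure $cl(\{\alpha_i\}, M^*) \cap \lambda$ lies inside $cl(\{\alpha_j : j<i\}, M^*)$, a set of cardinality at most $|i|\cdot\cf(\lambda) < \chi$; following the encoded functions, each $\alpha_i$ reaches an index $\gamma_i < \lambda^+$ carrying its share of the $\vec{X}$-data. Applying Fodor's lemma iteratively on $S$, I would stabilize the Skolem term used to compute $\alpha_i$ from earlier $\alpha_j$'s, the finite tuple of predecessors it invokes, and thus the reachable index $\gamma_i = \gamma$ and a tagged predecessor $j_i = j$; on a further stationary $S' \subseteq S$ all this data is constant. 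Two distinct elements of $S'$ then yield the same pair $(\alpha_{\gamma,j}, \beta_{\gamma,j})$, contradicting no-repetition in $\vec{X}_\gamma$. The main obstacle is arranging the coding --- particularly the auxiliary $\lambda \to \lambda^+$ functions and the mechanism for tagging pairs by their $(\gamma, j)$-index --- so that the Fodor-stabilization converts the abstract closure-containment of (d) into a genuine pair-coincidence rather than a mere value coincidence; careful bookkeeping of the tags throughout the iterative closure is the delicate point.
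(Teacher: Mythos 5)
Your proposal diverges from the paper's argument at the crucial step, and as written it does not produce the contradiction you need.

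The paper does not try to ``reach'' an index in $\lambda^+$ from each $\alpha_i$ and then stabilize it by Fodor. Instead, after applying (d) to get $\vec\zeta=\langle\zeta_i:i<\chi\rangle$ with $S^M_{\vec\zeta}$ stationary, it sets $\zeta=\sup_{i<\chi}\zeta_i$ -- a single \emph{external} ordinal, not reachable from any one $\alpha_i$ -- and looks for a repetition inside the one sequence $\vec X_\zeta=\langle(\alpha_{\zeta,j},\beta_{\zeta,j}):j<\zeta\rangle$ at the positions $j=\zeta_i$. The $\alpha$-coordinates lie in $U_\zeta$, a fixed set of size $\cf(\lambda)<\chi$, so pigeonhole collapses them; the $\beta$-coordinates lie in $U_{\zeta_i}\subseteq cl(\{\zeta_i\},M)\cap\lambda$, which for $i\in S^M_{\vec\zeta}$ sits inside the small, increasing, continuous chain $cl(\{\zeta_j:j<i\},M)$, so a Fodor argument collapses those too. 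One ends up with distinct positions $\zeta_{i_1}\neq\zeta_{i_2}$ in $\vec X_\zeta$ carrying the same pair -- exactly the forbidden repetition.

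Your scheme stabilizes the Skolem term, the finite tuple of predecessors, the reachable index $\gamma_i=\gamma$, \emph{and} the tagged predecessor $j_i=j$. But a repetition in $\vec X_\gamma$ requires two \emph{distinct} positions $j_1\neq j_2<\gamma$ with the same pair value $(\alpha_{\gamma,j_1},\beta_{\gamma,j_1})=(\alpha_{\gamma,j_2},\beta_{\gamma,j_2})$. If you Fodor-stabilize $j_i$ to a single $j$, you are comparing a pair with itself and no contradiction arises; worse, once the term and the predecessor tuple are both constant on $S'$, the value $\alpha_i$ itself is forced constant, so the whole situation trivializes. The positions must be allowed to vary while the pair values are forced to coincide -- that is precisely what the paper's two-layer pigeonhole (on $U_\zeta$ for the $\alpha$'s, and on the $W_i$-filtration for the $\beta$'s) accomplishes, and it is what is missing from your sketch. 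There is also a secondary wobble: one cannot Fodor-stabilize an index $\gamma_i$ ranging over $\lambda^+$ against a stationary subset of $\chi$ unless one first argues the $\gamma_i$ land in a set of size $<\chi$; the paper sidesteps this entirely because its target $\zeta$ is chosen once and for all as a supremum, not extracted pointwise.
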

\begin{proof}
Suppose towards a contradiction that (d) holds but (e) fails.
As (e) fails,  we can find  sequences $\vec{X}=\langle U_i: i<\lambda^+ \rangle$
and $\vec{X}_i=\langle (\alpha_{i, j}, \beta_{i, j}): j<i       \rangle$  as in clause (e). Let $M$ be a model in a vocabulary $\mathcal L$ such that:
\begin{enumerate}
\item $|\mathcal L|=\cf(\lambda)$,
\item $M$ has universe $\lambda^+$,
\item $M=(\lambda^+, \langle \tau^M_i: i<\cf(\lambda)\rangle, H^M        )$, where
\begin{enumerate}
\item $\tau^M_i=i,$
\item $H^M$ is a 2-place function such that for all $i$, $U_i \cap \lambda= \{ H^M(i, \alpha): \alpha < \cf(\lambda)    \}$.
\end{enumerate}
\end{enumerate}
Now by (d) applied to the model $M$, we can find a sequence $\vec{\zeta}= \langle \zeta_i: i<\chi \rangle$ of ordinals less than $\lambda^+$ such that the set
$S^M_{\vec{\zeta}}$ is stationary in $\chi$. Let
$\zeta=\sup\limits_{i<\chi}\zeta_i$. Consider the sequence $\vec{X}_\zeta=\langle (\alpha_{\zeta, \xi}, \beta_{\zeta, \xi}): \xi < \zeta       \rangle$.

For $i<\chi$, let
\[
W_i=cl(\{\zeta_j: j< i\}, M) \cap \lambda.
\]
So $\langle W_i: i<\chi \rangle$ is a $\subseteq$-increasing continuous sequence of sets each of cardinality $<\chi$.
Note that for each $i \in S^M_{\vec{\zeta}}$,
\[
\beta_{\zeta, \zeta_i} \in U_{\zeta_i} \cap \lambda \subseteq cl(\{\zeta_i\}, M) \cap \lambda \subseteq W_i.
\]
(The former inclusion $\subseteq$ holds because $\cf(\lambda) \cup \{\zeta_i  \} \subseteq cl(\{\zeta_i\}, M)$ and $cl(\{\zeta_i\}, M)$ is closed under $H^M$. The latter inclusion $\subseteq$ holds because $i \in S^M_{\vec{\zeta}}$). Then since $S^M_{\vec{\zeta}}$ is stationary in $\chi$,
there is $\beta_*$ such that
\[
U=\{i \in S^M_{\vec{\zeta}}: \beta_{\zeta, \zeta_i}=\beta_*  \}
\]
is stationary. Moreover, since $|U_\zeta|=\cf(\lambda) < \chi,$
we get some
$i_i < i_2$ in $U$ such that $\alpha_{\zeta, \zeta_{i_1}}=\alpha_{\zeta, \zeta_{i_2}}$. This contradicts that $\vec{X}_\zeta$
has no repetition.
\end{proof}
\begin{claim}
\label{cl4}
(e) implies $\neg \UB_\lambda$.
\end{claim}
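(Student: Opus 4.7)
The plan is to prove the contrapositive: assume $\UB_\lambda$, witnessed by a function $f:[\lambda^+]^{<\omega}\to\lambda^+$, and exhibit the sequence $\vec X=\langle U_i:i<\lambda^+\rangle$ together with the sequences $\vec X_i$ whose existence is denied by (e). Any enlargement of $f$ by extra Skolem functions still witnesses $\UB_\lambda$, so $f$ may be chosen as rich as the argument needs. For each $i<\lambda^+$ I would set
\[
U_i = \{\, x\cap\lambda : x\subseteq\lambda^+\text{ is }f\text{-closed},\ |x|=\cf(\lambda),\ i\in x,\ \sup(x\cap\lambda)=\lambda\,\};
\]
members of $U_i$ are cofinal subsets of $\lambda$ of size $\cf(\lambda)$, and $U_i$ is cofinal in $([\lambda]^{\cf(\lambda)},\subseteq)$: for any $B\in[\lambda]^{\cf(\lambda)}$, the $\lambda$-trace of the $f$-closure of $B\cup C\cup\{i\}$, with $C$ any cofinal sequence in $\lambda$ of length $\cf(\lambda)$, lies in $U_i$ and contains $B$.

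The heart of the plan is to choose, by transfinite recursion on $j<\lambda^+$, pairwise distinct representatives $\gamma_j\in U_j$. Once this is done, pick any $\alpha_i\in U_i$ for each $i$ and define $\alpha_{i,j}:=\alpha_i$, $\beta_{i,j}:=\gamma_j$ for $j<i$. The three membership demands are immediate, and the pairs $(\alpha_i,\gamma_j)$ are pairwise distinct for fixed $i$ because the $\gamma_j$'s are, yielding the forbidden $\vec X,\vec X_i$ and contradicting (e).

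The main obstacle, and the only place $\UB_\lambda$ is really used, is showing $|U_j|\geq\lambda^+$ for every $j<\lambda^+$ so that the recursion always has room. The intended argument runs as follows. If cofinal $C_1,C_2\subseteq\lambda$ of size $\cf(\lambda)$ satisfy $\mathrm{cl}_f(C_1\cup\{j\})\cap\lambda=\mathrm{cl}_f(C_2\cup\{j\})\cap\lambda=A$, then by $\UB_\lambda$ (applied to the two closures, which have the same cofinal trace) the closures are $\subseteq$-comparable, forcing $C_1,C_2\subseteq A$. Consequently each fibre of the map $C\mapsto \mathrm{cl}_f(C\cup\{j\})\cap\lambda$ has size at most $|[A]^{\cf(\lambda)}|=2^{\cf(\lambda)}$, while the domain has size $\lambda^{\cf(\lambda)}>\lambda$ by K\"onig, yielding $|U_j|\geq\lambda^+$ as soon as $2^{\cf(\lambda)}\leq\lambda$ (which in particular covers the target case $\lambda=\aleph_\omega$ under the usual assumptions). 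Where that cardinal-arithmetic inequality breaks down one would fall back on a more delicate variant letting both coordinates of each pair depend on $j$, e.g.\ $\alpha_{i,j}=\mathrm{cl}_f(C_{i,j}\cup\{i\})\cap\lambda$ and $\beta_{i,j}=\mathrm{cl}_f(C_{i,j}\cup\{j\})\cap\lambda$ for suitable $C_{i,j}$, and again use $\UB_\lambda$-comparability to extract distinctness of pairs directly from the labels $i,j$.
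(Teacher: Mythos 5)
Your proof attempts something genuinely different from the paper, but it rests on a misreading of clause (e) that cannot be repaired within your framework. In (e) the displayed ``$U_i\subseteq[\lambda]^{\cf(\lambda)}$'' is a typo for ``$U_i\in[\lambda]^{\cf(\lambda)}$'': each $U_i$ is a \emph{single} cofinal subset of $\lambda$ of size $\cf(\lambda)$, and the $\alpha_{i,j},\beta_{i,j}$ are ordinals below $\lambda$. This is forced by the rest of the proof --- in Claim~\ref{cl3} $U_i$ is enumerated as $\langle H^M(i,\alpha):\alpha<\cf(\lambda)\rangle$ (so $|U_i|\le\cf(\lambda)$, which is impossible for a cofinal family in $([\lambda]^{\cf(\lambda)},\subseteq)$), and $G^M$ applied to $(\alpha_{j,i},\beta_{j,i})$ shows these are ordinal arguments, not sets. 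Your reading also makes the claim vacuous: if $U_i$ only needs to be cofinal in $([\lambda]^{\cf(\lambda)},\subseteq)$ one could simply take $U_i=[\lambda]^{\cf(\lambda)}$ and pick distinct $\gamma_j$'s without ever mentioning $\UB_\lambda$, contradicting the paper's assertion (under a Jonsson hypothesis) that (e) is \emph{equivalent} to $\neg\UB_\lambda$.

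Once the $U_i$'s are pinned down to size $\cf(\lambda)$, your ``distinct representatives $\gamma_j\in U_j$'' strategy is dead on arrival: there are only $\cf(\lambda)$ ordinals in each $U_j$, far fewer than the $\lambda^+$ distinct choices you need, and no cardinal-arithmetic side condition like $2^{\cf(\lambda)}\le\lambda$ can save it. The mechanism that actually produces the no-repetition condition is entirely different. The paper fixes a Skolemized model $M$ coding $f$, a cofinal map into $\lambda$, and a pairing function; sets $N_\xi=\mathrm{cl}(\{\xi\},M)$ and $U_\xi=N_\xi\cap\lambda$; and uses $\UB_\lambda$ once, to show that for $\xi<\zeta$ in a club $E$ the two $f$-closed sets $\mathrm{cl}(\{\xi\}\cup V,M)$ and $\mathrm{cl}(\{\zeta\}\cup V,M)$ with $V=(N_\xi\cup N_\zeta)\cap\lambda$ must be $\subseteq$-comparable, and since $\zeta$ lies only in the second, $\xi$ lies in the second too. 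Thus $\xi=\sigma_{i(\xi,\zeta)}(\zeta,\vec a_{\xi,\zeta},\vec b_{\xi,\zeta})$ for a term $\sigma_{i(\xi,\zeta)}$ with parameters $\vec a_{\xi,\zeta}\in(N_\xi\cap\lambda)^{<\omega}$ and $\vec b_{\xi,\zeta}\in(N_\zeta\cap\lambda)^{<\omega}$. The pair $(\alpha_{\xi,\zeta},\beta_{\xi,\zeta})$ is the pairing-code of $\langle i(\xi,\zeta)\rangle{}^\frown\vec a_{\xi,\zeta}$ and $\langle i(\xi,\zeta)\rangle{}^\frown\vec b_{\xi,\zeta}$; these lie in $U_\xi,U_\zeta$ respectively, and the pairs are automatically without repetition because, together with $\zeta$, each one \emph{decodes} to $\xi$. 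This Skolem-term decoding is the idea missing from your argument, and your closing ``fallback'' paragraph, while gesturing in this direction, neither commits to the right objects (ordinals rather than sets) nor supplies the decoding step that actually yields distinctness.
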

\begin{proof}
Suppose not. Thus we can assume that both (e) and $\UB_\lambda$ hold. Let $f: [\lambda^+]^{<\omega} \to \lambda^+$
witness $\UB_\lambda.$ Choose a vocabulary $\mathcal L$ of size $\cf(\lambda)$ and an  $\mathcal L$-model $M$ such that:
\begin{enumerate}
\item $M$ has universe $\lambda^+,$
\item $M$ expands the model $M_0$ of Lemma \ref{lem2}, by expanding $\mathcal L_0$ (the vocabulary of $M_0$) using the constant symbols $\langle d^M_i: i< \cf(\lambda) \rangle$ and the function symbols
$( \langle F^M_n:  n< \omega \rangle, p^M, G_1^M, G_2^M    )$,
where:
\begin{enumerate}
\item $d_i^M=i$ for $i<\cf(\lambda)$,

\item $F^M_n$ is an $n$-ary function such that
\[
F^M_n(\alpha_0, \cdots, \alpha_{n-1}) = f(\{\alpha_0, \cdots, \alpha_{n-1}    \}),
\]

\item $p^M$ is a pairing function on $\lambda^+,$ mapping $\lambda \times \lambda$ onto $\lambda$

\item $G_1^M$ and $G_2^M$ are 2-place functions such that for every $\alpha \in [\lambda, \lambda^+)$,
$\langle G_1(\beta, \alpha): \beta < \alpha   \rangle$ enumerates $\lambda$ and
\[
\big( \beta < \alpha ~\& ~ \gamma=G_1(\beta, \alpha) \big) \Rightarrow \beta=G_2(\gamma, \alpha).
\]
\end{enumerate}
\end{enumerate}
By expanding $M$ further, let us suppose that
\begin{enumerate}
\item[(3)] $M$ contains Skolem functions.
 \end{enumerate}
For $\alpha < \lambda^+,$ set $N_\alpha= cl(\{\alpha\}, M)$.

$(*)_1$
 $N_\alpha$ belongs to $[\lambda^+]^{\cf(\lambda)}$ and it contains an unbounded subset of $\lambda$.
\begin{proof}
As $\mathcal L$ has size $\cf(\lambda)$, so $|N_\alpha| \leq \cf(\lambda)$. On the other hand, by clause (2)(a), $\cf(\lambda) \subseteq N_\alpha$
and hence $N_\alpha$ belongs to $[\lambda^+]^{\cf(\lambda)}$. Also as $\{c_i^{M_0}: i < \cf(\lambda)   \} \subseteq N_\alpha$ (see the proof of Lemma \ref{lem2}) and $\langle c_i^{M_0}: i < \cf(\lambda)  \rangle$ is an unbounded sequence in $\lambda$, we have  $N_\alpha$ contains an unbounded subset of $\lambda$.
\end{proof}

Let
\[
E=\{ \delta \in (\lambda, \lambda^+): \delta = cl(\delta, M)    \}.
\]
$E$ is clearly a club of $\lambda^+$ and $E \cap \lambda=\emptyset.$
By Lemma \ref{lem2}, we have

$(*)_2$ Suppose $\xi < \zeta$ are in $E$.  Then
\[
\xi \in cl\bigg(\{\zeta\} \cup (N_\xi \cap \lambda) \cup (N_\zeta \cap \lambda), M \bigg).
\]
\begin{proof}
Suppose by the way of contradiction that $\xi \notin cl\big(\{\zeta\} \cup (N_\xi \cap \lambda) \cup (N_\zeta \cap \lambda), M \big).$
Let $V_1=N_\xi \cap \lambda$, $V_2=N_\zeta \cap \lambda$ and $V=V_1 \cup V_2$. By our assumption,
$$\xi \notin cl(\{\zeta\} \cup V, M),$$
also,  it is clear that
$$\zeta \notin cl(\{\xi\} \cup V, M).$$
Thus by Lemma \ref{lem2}, we can find submodels $N^*_1, N^*_2$ of $M$ such that
\begin{enumerate}
\item $N^*_1 \cap \lambda=N^*_2 \cap \lambda$ is unbounded in $\lambda$,
\item $\xi \in N^*_1 \setminus N^*_2$ and $\zeta \in N^*_2 \setminus N^*_1$.
\end{enumerate}
The models $N_1^*$ and $N^*_2$ are clearly $f$-closed, and
by clause (1) above and $\UB_\lambda$, we have $N^*_1 \subseteq N^*_2$ or $N^*_2 \subseteq N^*_1$,
which contradicts clause (2) above.
\end{proof}

Let $\langle   \sigma_i(x_0, \cdots, x_{n(i)-1}): i<\cf(\lambda)   \rangle$
list all terms of $\mathcal{L}$. By $(*)_2$, for each $\xi < \zeta$ from $E$,
we can choose some $i(\xi, \zeta) < \cf(\lambda)$ together with sequences
$\vec{a}_{\xi, \zeta}  \in (N_\zeta \cap \lambda)^{<\omega}$
and
$\vec{b}_{\xi, \zeta}  \in (N_\xi \cap \lambda)^{<\omega}$
such that
\begin{center}
$(\oplus)_1$$\hspace{1.0cm}$ $\xi = \sigma_{i(\xi, \zeta)}(\zeta,  \vec{a}_{\xi, \zeta}, \vec{b}_{\xi, \zeta}).$
\end{center}
For $\xi \in E$ set $U_\xi=N_\xi=cl(\{\xi\}, M).$ It follows that $U_\xi=cl(U_\xi, M)$. For
$ \xi < \zeta$ use the pairing function $p^M$ to find $\alpha_{\zeta, \xi}$ and $\beta_{\zeta, \xi}$ such that
$\alpha_{\zeta, \xi}$ codes $\langle i(\xi, \zeta) \rangle ^{\frown} \vec{a}_{\xi, \zeta}$
and  $\beta_{\zeta, \xi}$ codes $\vec{b}_{\xi, \zeta}$.

Now the sequences $$\vec{X}=\langle U_\xi: \xi \in E  \rangle$$

and  $$\langle \langle (\alpha_{\zeta, \xi}, \beta_{\zeta, \xi}): \xi \in \zeta \cap E     \rangle: \zeta \in E  \rangle$$
witness the failure of (e). We get a contradiction and the claim
follows.
\end{proof}
Thus so far we have shown that
\[
(a) \implies (b) \implies (c) \implies (d) \implies (e) \implies \neg \UB_\lambda.
\]
\begin{claim}
Suppose that $\cf(\lambda)$ is not a Jonsson cardinal. Then $\neg \UB_\lambda$
 implies (e).
 \end{claim}
 \begin{proof}
Suppose towards a contradiction that (e) fails and let $\vec{X}=\langle U_i: i<\lambda^+  \rangle$
and $\langle  \vec{X}_i: i< \lambda^+      \rangle$, where $\vec{X}_i=\langle (\alpha_{i, j}, \beta_{i, j}): j<i   \rangle$
as in clause (e) witness this failure. Let $\langle \lambda_i: i<\cf(\lambda) \rangle$
be an increasing sequence cofinal in $\lambda$ and define the function $c: \lambda \to \cf(\lambda)$ as
\[
c(\alpha) =\min\{i<\cf(\lambda): \alpha < \lambda_i \}.
\]
For $\xi < \lambda^+$ let $\langle \gamma_{\xi, i}: i<\cf(\lambda)  \rangle$
enumerate $U_\xi$ such that each element of $U_\xi$ appears cofinally many often. Let $f: [\lambda^+]^{<\omega} \to \lambda^+$ be such that:
\begin{enumerate}
\item if $\xi < \zeta < \lambda^+$, then
\[
f(\alpha_{\zeta, \xi}, \beta_{\zeta, \xi}, \zeta)=\xi,
\]

\item if $\zeta < \lambda^+$ and $\alpha < \lambda$, then for arbitrary
large $j< \cf(\lambda)$, we have
\[
\sup\limits_{i<j}\lambda_i < \alpha < \lambda_j \implies f(\alpha, \zeta)=\gamma_{\zeta, j}.
\]

\item if $A \in [\cf(\lambda)]^{\cf(\lambda)}$, $c(\alpha_i)=i$ for $i \in A$ and $j<\cf(\lambda),$ then for some $n$ and some sequence $\vec\xi =
\langle  \xi_0, \cdots, \xi_{n-1}   \rangle   \in A^n$, we have
    \[
    j =c (f(\alpha_{\xi_0}, \cdots, \alpha_{\xi_{n-1}})).
    \]
\end{enumerate}
Since $\cf(\lambda)$ is not a Jonsson cardinal, we can define such a function $f$\footnote{this assumption is used to guarantee clause (3) in definition of $f$ holds.}.
Let us show that the pair $(f, c)$ witnesses  $\UB_\lambda$ holds,\footnote{We can define a function $\tilde{f}: [\lambda^+]^{<\omega} \to \lambda^+$ which codes $(f, c)$ so that a set is closed under $\tilde f$ if and only if it is closed under both of $f$ and $c$.} which contradicts our  assumption.
To see this, suppose $x, y \subseteq \lambda^+$ are closed under $f$, $x \cap \lambda=y\cap \lambda$ and $\sup(x\cap \lambda)=\lambda.$
Assume towards a contradiction that $x \nsubseteq y$ and $y \nsubseteq x.$ Let $\xi=\min(x \setminus y)$
and $\zeta=\min(y \setminus x)$, and let us suppose that $\xi < \zeta.$

By clause (3), $\cf(\lambda) \subseteq y,$  and then by clause (2), and since $y \cap \lambda$ is cofinal in $\lambda$, we have $U_\zeta \subseteq y.$ Similarly $U_\xi \subseteq x$. As $x \cap \lambda=y \cap \lambda$ and $U_\xi \subseteq \lambda,$ we conclude that $U_\xi \subseteq y$ as well. Thus by item (1), and since $\alpha_{\zeta, \xi}, \beta_{\zeta, \xi}, \zeta \in y$ we have $\xi \in y$,
which contradicts the choice of $\xi \in x \setminus y$. This completes the proof of the claim.
\end{proof}
The theorem follows.
\end{proof}
\begin{remark}
The above proof shows that the following are equivalent:
\begin{enumerate}
\item clause (e) of Theorem \ref{thm1},

\item for each model $M$ with universe $\lambda^+$ and vocabulary of cardinality $\cf(\lambda),$ there are substructures $N_0, N_1$
of $M$ such that $N_0 \cap \lambda=N_1 \cap \lambda$, $N_0 \nsubseteq N_1$ and $N_1 \nsubseteq N_0.$
\end{enumerate}
\end{remark}
As we noticed earlier, it is consistent relative to the existence of large cardinals that  Chang's transfer principle $(\aleph_{\omega+1}, \aleph_\omega) \twoheadrightarrow (\aleph_{1}, \aleph_0)$ holds with $\aleph_\omega$ being strong limit. Hence by our main theorem, we have the following
corollary.
\begin{corollary}
It is consistent, relative to the existence of large cardinals,  that $\UB_{\aleph_\omega}$ fails.
\end{corollary}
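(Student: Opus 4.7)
The plan is to combine Theorem \ref{thm1}(a), which has just been established, with a known consistency result in the other direction. Theorem \ref{thm1}(a) tells us that if the Chang transfer principle $(\aleph_{\omega+1},\aleph_\omega) \twoheadrightarrow (\aleph_1,\aleph_0)$ holds, then $\UB_{\aleph_\omega}$ fails. Therefore, to obtain a model in which $\UB_{\aleph_\omega}$ fails, it is enough to produce a model in which this Chang transfer principle holds, which is precisely the content of the results cited in the paper (Levinski--Magidor--Shelah \cite{levi}, with the improvements of Eskew \cite{eskew} and Hayut \cite{hayut}).

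Concretely, I would proceed as follows. First, I would invoke the main theorem of \cite{levi}, which establishes, starting from a suitable huge-type large cardinal hypothesis, the consistency of $(\aleph_{\omega+1},\aleph_\omega) \twoheadrightarrow (\aleph_1,\aleph_0)$ via a Prikry-style/collapse construction; alternatively one may quote \cite{eskew} or \cite{hayut}, which lower the large-cardinal hypothesis. In any of these forcing extensions, $\aleph_\omega$ is a singular cardinal of cofinality $\omega$ and the above Chang transfer principle holds in the extension. Second, working inside that model, I would apply clause (a) of Theorem \ref{thm1} with $\lambda = \aleph_\omega$, concluding that $\UB_{\aleph_\omega}$ fails there.

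There is essentially no obstacle beyond correctly packaging these two ingredients: the hard combinatorial work has already been done, on one side in Theorem \ref{thm1}(a) and on the other side in the cited consistency proofs. The only thing worth being careful about is that the references cited really do produce $(\aleph_{\omega+1},\aleph_\omega) \twoheadrightarrow (\aleph_1,\aleph_0)$ at $\aleph_\omega$ (and not merely some variant), and that the target cardinal structure in the generic extension is indeed $\aleph_\omega$; this is standard in the constructions of \cite{levi,eskew,hayut}, so the corollary follows immediately.
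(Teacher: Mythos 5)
Your proposal is correct and is exactly the paper's argument: cite the consistency of $(\aleph_{\omega+1},\aleph_\omega) \twoheadrightarrow (\aleph_1,\aleph_0)$ from \cite{levi} (or \cite{eskew}, \cite{hayut}) and then apply clause (a) of Theorem \ref{thm1}. Nothing further is needed.
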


\end{document}